\theoremstyle{plain}
\newtheorem{thm}{Theorem}
\newtheorem{prop}[thm]{Proposition}
\newtheorem{lem}[thm]{Lemma}
\newtheorem*{thm*}{Theorem}
\newtheorem*{fact*}{Fact}
\theoremstyle{definition}
\newtheorem{dfn}[thm]{Definition}
\newtheorem*{dfn*}{Definition}
\theoremstyle{remark}
\newcommand{\bn}{\mathbb{N}}
\newcommand{\ul}{\mathfrak u}
\newcommand{\cp}{\mathcal P}
\newcommand{\eps}{\varepsilon}
\newcommand{\id}{\mathfrak{id}}
\renewcommand{\id}{\mathrm{1}}
\newcommand{\haar}{\lambda}
\def\defin#1{\textbf{\emph{#1}}}  
\newcommand{\pmp}{p.m.p.\ }
\renewcommand{\part}{\operatorname{Part}}
\newcommand{\lcsc}{l.c.s.c.\ }
\newcommand{\covol}{\operatorname{covol}}
\newcommand{\Bibkeyhack}[1]{}
\begin{document}

\onehalfspace

\thispagestyle{empty}
\title{On Farber sequences in locally compact groups}

\author{Alessandro Carderi}
\address{A.C., Institut f\"ur Geometrie, TU Dresden, 01062 Dresden, Germany}
\email{alessandro.carderi@tu-dresden.de}

\begin{abstract}
{We will prove that any sequence of lattices in a fixed locally compact group which satisfy the conclusion of the Stuck-Zimmer theorem is Farber.}
\end{abstract}

\maketitle
 
\section*{Introduction}

A sequence of lattices $\{\Gamma_n\}_n$ of a locally compact second countable (\textit{l.c.s.c.}) group $G$ is \defin{Farber} (or \textit{almost everywhere thick}) if for every precompact neighborhood of the identity $U\subset G$ we have that \[\lim_n \frac{\haar\left(\left\{g\Gamma_n\in G/\Gamma_n:\ g\Gamma_ng^{-1}\cap U\neq \{\id_G\}\right\}\right)}{\haar(G/\Gamma_n)}=0,\]
where $\haar$ denotes a Haar measure on $G$. Farber sequences of finite index subgroups were introduced by Farber in \cite{far1998} in order to understand the condition that a chain of finite index subgroups has to satisfy to make the L\"uck approximation for $\ell^2$-Betti numbers hold. One can think of a Farber sequence as a sequence of subgroups for which the quotients $G/\Gamma_n$ (and the action of $G$ on them) approximate the group. Indeed in the case of countable groups a sequence is Farber if and only if the sequence of quotients is a sofic approximation of the group. 

Recently it was proven in \cite{abe2017}, \cite{gel2017} and \cite{lev2017a} that in some situations some sequences of lattices are automatically Farber. All the known cases of such a phenomenon are using a form of the Stuck-Zimmer theorem. In \cite[Section 4.3]{CGS} we pointed out that using the notion of ultraproduct one can give a very easy and self-contained argument to show that any sequence of finite index subgroups of a \textit{Stuck-Zimmer} group is Farber. The aim of this note is to use the notion of regular ultraproduct, that we recently introduced in \cite{ulcg}, to derive and generalize the above mentioned theorems of \cite{abe2017}, \cite{gel2017} and \cite{lev2017a}.

\begin{dfn}
  A group $G$ has \defin{property }$\{N_j\}_{j\leq k}$-\defin{(SZ)} \textit{with respect some normal subgroups} $\{N_j\}_{j\leq k}$ of $G$ if the stabilizers of every probability measure preserving action of $G$ on a standard Borel space\footnote{cf. Proposition \ref{step4}} are finite, whenever
  \begin{itemize}
  \item every $G$-orbit is a null set,
  \item the action of every subgroup $N_i$ has spectral gap.
  \end{itemize}
\end{dfn}

We will just say that a group $G$ has property (SZ) if we do not want to specify the normal subgroups. The definition is inspired by a theorem of Stuck and Zimmer \cite{stu1994} which says that every semisimple Lie group whose simple factors have rank at least 2 has property (SZ). In their context the group $G$ has property (T) and hence the condition on spectral gap is automatic. The current form is also inspired by Theorem 4 of \cite{lev2017a} which, as it is stated in the paper, is a formal corollary of \cite{bad2006}. The theorem claims that products of simple locally compact groups (which may not have property (T)) has property (SZ). In \cite{bad2006} it is also proven that products of compactly generated property (T) have property (SZ). Finally the work of Stuck and Zimmer was extended to semisimple analytic groups whose simple factors have rank at least 2 in \cite{lev2017}. In all the above cases we have that $G=N_1\times \ldots \times N_k$. We do not know whether it is always the case.

Recall that a measurable, measure preserving action of a \lcsc group $G$ on a probability space $(X,\mu)$ has spectral gap if the Koopman representation $\mathrm L_0^2(X,\mu)$ on the space of functions with integral $0$ has no almost invariant vector. A sequence of actions of $G$ on $\{(X_n,\mu_n)\}_n$ has spectral gap if the representation $\bigoplus_n \mathrm L_0^2(X_n,\mu_n)$ has no almost invariant vector. 

\begin{dfn}
  Let $G$ be a \lcsc group and let $\{N_j\}_{j\leq k}$ be normal subgroups of $G$. We say that $G$ has the \defin{property} $\{N_j\}_{j\leq k}$-($\boldsymbol{\tau}$) \textit{with respect to the sequence of lattices} $\{\Gamma_n\}_n$ if for each $j\leq k$ we have that $\{(G/\Gamma_n,\mu_n)\}_n$ has spectral gap as a sequence of $N_j$-spaces, where we denote by $\mu_n$ the renormalized Haar measure on $G/\Gamma_n$.
\end{dfn}

Observe that if an action has spectral gap, then it is ergodic. So in particular if $G$ has the property $\{N_j\}_{j\leq k}$-($\tau$) with respect to the sequence of lattices $\{\Gamma_n\}_n$, then each $\Gamma_n$ is \textit{irreducible} (with respect to $\{N_j\}_j$). The converse hold whenever each $N_j$ has property (T). The aim of this note is to prove a generalization of Theorem 4.4 of \cite{abe2017}, of the main theorem of \cite{gel2017} and \cite{lev2017a}.

\begin{thm}
  Let $G$ be a \lcsc group and let $N_j<G$ be normal subgroups for $j\leq k$. Assume that $G$ has property $\{N_j\}_{j\leq k}$-(SZ) and that $G$ has the property $\{N_j\}_{j\leq k}$-($\tau$) with respect to the sequence of lattices $\{\Gamma_n\}_n$ for which the covolume tends to infinity, $\lim_\ul\covol(\Gamma_n)=\infty$. Then the action of $G$ on the regular ultraproduct $[G/\Gamma_n]_\ul^R$ is ergodic and all the stabilizers are finite.

In particular, $\{\Gamma_n\}_n$ is \textit{mostly Farber}\footnote{see Definition \ref{dfn:mfarb}} and if $G$ has a neighborhood of the identity without discrete subgroups, then $\{\Gamma_n\}_n$ is Farber.  
\end{thm}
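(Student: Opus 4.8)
The plan is to collapse the sequence $\{(G/\Gamma_n,\mu_n)\}_n$ into a single probability measure preserving action of $G$, namely the regular ultraproduct $X:=[G/\Gamma_n]_\ul^R$ of \cite{ulcg}, and to check that this action satisfies the two hypotheses in the definition of property $\{N_j\}_{j\leq k}$-(SZ); the conclusions of the theorem will then follow essentially formally, the (mostly) Farber statements being obtained by unwinding Definition \ref{dfn:mfarb}. Recall from \cite{ulcg} together with Proposition \ref{step4} that $X$ may be taken to be a standard Borel probability space carrying a measure preserving $G$-action, so that property (SZ) is applicable to $G\curvearrowright X$.

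First I would establish the spectral gap hypothesis. For each $j\leq k$, property $\{N_j\}_{j\leq k}$-($\tau$) with respect to $\{\Gamma_n\}_n$ asserts precisely that $\bigoplus_n\mathrm L_0^2(G/\Gamma_n,\mu_n)$ has no $N_j$-almost invariant vector. Since the regular ultraproduct is built so that the Koopman representation of $G$ on $\mathrm L_0^2(X,\mu_X)$ is a subrepresentation of the ultraproduct of the Koopman representations of the spaces $(G/\Gamma_n,\mu_n)$, and since the absence of almost invariant vectors passes from a sequence of unitary representations to their ultraproduct, each $N_j$ acts on $X$ with spectral gap. In particular the $N_j$-action --- hence the $G$-action --- on $X$ is ergodic, which is the first assertion of the theorem.

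Next I would use $\lim_\ul\covol(\Gamma_n)=\infty$ to check that every $G$-orbit in $X$ is $\mu_X$-null, the remaining hypothesis of property (SZ). For any compact $K\subseteq G$ one has $\mu_n(K\Gamma_n/\Gamma_n)\leq\haar(K)/\covol(\Gamma_n)$, which tends to $0$ along $\ul$, and the compatibility of $\mu_X$ with the approximating measures transfers this to $\mu_X(Kx)=0$ for every $x\in X$; exhausting $G$ by an increasing sequence of compacta and using continuity of $\mu_X$, every $G$-orbit in $X$ is null. (Equivalently, an ergodic p.m.p.\ action of $G$ is either essentially transitive or has all orbits null, and essential transitivity would exhibit a stabilizer of finite covolume, contradicting the covolume growth.) Both hypotheses now being in place, property $\{N_j\}_{j\leq k}$-(SZ) applies to $G\curvearrowright X$ and shows that all of its stabilizers are finite; combined with the ergodicity above, this is the first paragraph of the theorem.

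For the ``in particular'': by Definition \ref{dfn:mfarb}, the mostly Farber property of $\{\Gamma_n\}_n$ is exactly the reformulation, via the regular ultraproduct, of the finiteness of the stabilizers of $G\curvearrowright X$ just obtained. Finally, if $G$ admits a neighbourhood of $\id$ containing no nontrivial discrete subgroup, then a short argument --- preventing torsion elements and short elements of the conjugate lattices $g\Gamma_ng^{-1}$ from accumulating at $\id$, using that finite Chabauty limits cannot absorb such accumulation --- upgrades the mostly Farber property to the Farber property. I expect the real work to lie in the two transfer steps between the sequence $\{(G/\Gamma_n,\mu_n)\}_n$ and the single action $G\curvearrowright X$ --- the vanishing of all orbits in $X$ and the final passage from finiteness of stabilizers to the Farber condition --- both of which rest on the regular ultraproduct formalism of \cite{ulcg}; granting that formalism, the argument reduces to a direct verification of the two hypotheses of property (SZ).
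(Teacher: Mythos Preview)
Your overall architecture is exactly the paper's: build the regular ultraproduct $X=[G/\Gamma_n]_\ul^R$, verify the two hypotheses of property (SZ) for $G\curvearrowright X$, and then translate finite stabilizers back into the (mostly) Farber property. The spectral gap step matches the paper's Proposition~\ref{step2} essentially verbatim, and the passage from finite stabilizers on $X$ to mostly Farber is the content of Proposition~\ref{step1}.

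The genuine gap is in the null-orbits step and in how you invoke Proposition~\ref{step4}. You assert that ``$X$ may be taken to be a standard Borel probability space'', but it is not: the regular ultraproduct is non-separable, and Proposition~\ref{step4} does not make $X$ standard; it produces a \emph{standard factor} $Y$ with null orbits, and its proof uses the witnessing sets $A_\eps$ from the paper's definition of ``null orbits'' to generate the $\sigma$-algebra of $Y$. That definition asks for measurable $A_\eps$ with $GA_\eps$ conull and $\mu(VA_\eps)\le\eps$; for non-standard spaces this is strictly stronger than ``every $G$-orbit is a null set''. Your measure estimate $\mu_n(Kg_n\Gamma_n/\Gamma_n)\le\haar(K)/\covol(\Gamma_n)\to 0$ does give $\mu_X^*(Kx)=0$ for every compact $K$ and every $x$, hence outer-null orbits, but it does not supply the small transversals $A_\eps$ with conull $G$-saturation needed to build a good standard factor. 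The parenthetical dichotomy ``ergodic $\Rightarrow$ essentially transitive or all orbits null'' is a statement about standard spaces; applying it to $X$ is illegitimate, and applying it to an arbitrary standard factor $Y$ does not yield a contradiction with $\covol(\Gamma_n)\to\infty$ (a transitive factor $Y\cong G/H$ would give a fixed finite-covolume $H$, which has no immediate relation to the $\Gamma_n$).

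This is precisely why the paper isolates Proposition~\ref{step3}: it constructs, inside each $G/\Gamma_n$, nested discrete sections $D_n^i\subset D_n$ with $\mu_n(UD_n^i)\le 2^{-i}$ and $K_iD_n^i=G/\Gamma_n$ for some compact $K_i$, and then shows that suitable thickenings $\{B_\eta D_n^i\}_n$ are regular sequences, so that their classes in $X$ serve as the $A_\eps$. That combinatorial construction is the missing idea in your sketch. Finally, for the upgrade from mostly Farber to Farber the paper's Lemma~\ref{mostly} is a two-line group-theoretic argument (set $V=U^2$ and observe $S:=g\Gamma_ng^{-1}\cap V\subset U$ forces $S^2\subset S$, so $S$ is a discrete subgroup of $U$, hence trivial); your Chabauty-limit description is in the right spirit but more roundabout than necessary.
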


We would like to recall that the \textit{regular ultraproduct} $[X_n]_\ul^R$ of a sequence of probability measure preserving actions on $(X_n,\mu_n)$ of the \lcsc group $G$ is the set of sequences $x_n\in X_n$ modulo the equivalence relation $\sim_\ul^R$ defined by $(x_n)_n\sim_\ul^R(y_n)_n$ if there exists a sequence $h_n\in G$ such that $\lim_\ul h_n=\id$ and $h_nx_n=y_n$ for $\ul$-almost every $n$. The $\sigma$-algebra of the regular ultraproduct is generated by sequences of subsets $\{A_n\subset X_n\}_n$ which are \textit{regular}, that is such that for every $\eps>0$ there exists a neighborhood of the identity $U\subset G$ such that $\mu_n(UA_n\setminus A_n)\leq \eps$ for $\ul$-almost every $n$. The measure of an equivalence class of a regular sequence $[A_n]_\ul^R$ is $\mu_\ul^R([A_n]_\ul^R)=\lim_\ul \mu_n(A_n)$. The proof of the theorem will use some of the easy properties we collected about the regular ultraproduct in \cite{ulcg}. We will sketch the proofs of these facts wherever they are used.

In order to prove the theorem we will follow the following steps. 

\begin{enumerate}
\item The action of $G$ on $[G/\Gamma_n]_\ul^R$ has finite stabilizers if and only if the sequence of subgroups $\{\Gamma_n\}_n$ is mostly Farber.
\item Property $\{N_j\}_{j\leq k}$-($\tau$) implies that the action of $N_j$ on $[G/\Gamma_n]_\ul^R$ has spectral gap for every $j\leq k$.
\item The action of $G$ on $[G/\Gamma_n]_\ul^R$ has null orbits.
\item There exists a standard factor of $[G/\Gamma_n]_\ul^R$ which allows us to conclude the proof.
\end{enumerate}

\section*{Proof}

\subsection*{Step 1: Farber vs Freeness}

Let us fix a \pmp action of a group $G$ on the probability space $(X,\mu)$. We say that the action is \defin{free} if it is \textit{point-wise almost everywhere free}, that is for every $g\in G$ and for almost every point $x$ we have that $gx\neq x$. Observe that if the group is locally compact and the action is measurable, Fubini implies that we can exchange ``for every $g$'' and ``for almost every $x$''. 

We say that an action is \defin{measurably free} if for every $g\in G$ there exists a partition $\{A_n\}_n$ of a conull subset of $X$ such that $gA_n\cap A_n=\emptyset$. Clearly measurably free implies free and for actions on standard Borel spaces the two notions coincide. 

\begin{dfn}\label{dfn:mfarb}
  Let $G$ be a \lcsc group and let $\{\Gamma_n\}_n$ be a sequence of lattices of $G$. We say that $\{\Gamma_n\}_n$ is \defin{mostly Farber} if for every precompact neighborhoods of the identity $U\subset V$ we have  \[\lim_n\frac{\lambda\left(\left\{ g\Gamma_n\in G/\Gamma_n:\ g\Gamma_ng^{-1}\cap V\subset U \right\}\right)}{\lambda(G/\Gamma_n)}=1.\] 
\end{dfn}

In \cite{ulcg} we observed that the stabilizer of a point $[x_n]_\ul^R\in [G/\Gamma_n]_\ul^R$ is the pointed Hausdorff limit of the stabilizers of $x_n$ as subsets of $G$. Indeed we have that $g[x_n]_\ul^R=[x_n]_\ul^R$ if and only if there exists a sequence $\{h_n\}$ such that $\lim_\ul h_n=\id_G$ and $gh_nx_n=x_n$ for $\ul$-almost every $n$. From this observation, we can easily derive the following proposition. 

\begin{prop}\label{step1}
 Let $G$ be a \lcsc group and let $\{\Gamma_n\}_n$ be a sequence of lattices of $G$ whose covolume tends to infinity. Then the action of $G$ is free if and only if the sequence of lattices is mostly Farber. 
\end{prop}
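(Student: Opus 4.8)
The plan is to translate each side of the equivalence into a statement about the conjugates of the $\Gamma_n$ in $G$, and to pass between the two through the description, recalled just before the statement, of $\operatorname{Stab}([x_n]_\ul^R)$ as the pointed Hausdorff limit of the subgroups $\operatorname{Stab}(x_n)$. Write $x_n=g_n\Gamma_n$, so $\operatorname{Stab}(x_n)=g_n\Gamma_n g_n^{-1}$, and for precompact open neighbourhoods $\id\in U\subseteq V$ of the identity put
\[ B_n^{U,V}=\left\{g_n\Gamma_n\in G/\Gamma_n:\ g_n\Gamma_n g_n^{-1}\cap V\not\subseteq U\right\}, \]
a Borel set whose complement is the set in Definition~\ref{dfn:mfarb}; thus $\{\Gamma_n\}_n$ is mostly Farber exactly when $\lim_n\mu_n(B_n^{U,V})=0$ for every such pair. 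I shall use two facts about $[G/\Gamma_n]_\ul^R$ that follow from the description of its measure in \cite{ulcg}: (i) for any sequence of Borel sets $C_n\subseteq G/\Gamma_n$ the set of classes admitting a representative eventually in $C_n$ has outer $\mu_\ul^R$-measure $\leq\lim_\ul\mu_n(C_n)$; and (ii) such a sequence may be \emph{regularised} — replacing each $C_n$ by its set of $\mu_n$-density points at vanishing scales — into a regular sequence $\widehat C_n$ with $\mu_n(\widehat C_n)=\mu_n(C_n)$, so that $[\widehat C_n]_\ul^R$ is measurable of measure $\lim_\ul\mu_n(C_n)$. Both are compatible with the $G$-invariance of the $\mu_n$, so that translating a $C_n$ by a group element changes nothing.

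\emph{Mostly Farber $\Rightarrow$ free.} Fix $g\neq\id$ and choose precompact open $\id\in U\subseteq V$ with $g\in V$ and $g\notin\overline U$. If $g$ fixes $[x_n]_\ul^R$ then there is $h_n\to\id$ with $g h_n\in g_n\Gamma_n g_n^{-1}$ for $\ul$-a.e.\ $n$; since $g h_n\to g\in V\setminus\overline U$, for $\ul$-a.e.\ $n$ we get $g h_n\in V\setminus U$, hence $x_n\in B_n^{U,V}$. So $\operatorname{Fix}(g)$ lies in the set of classes with a representative eventually in $B_n^{U,V}$, and by (i) — after enlarging $(U,V)$ slightly to a pair $(U',V')$ to absorb the translates coming from the choice of representative — $\mu_\ul^R(\operatorname{Fix}(g))\leq\lim_\ul\mu_n(B_n^{U',V'})=0$. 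As $g$ was arbitrary the action is point-wise almost everywhere free; this half uses nothing about the covolume.

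\emph{Free $\Rightarrow$ mostly Farber}, by contraposition. Suppose $\{\Gamma_n\}_n$ is not mostly Farber, so there are precompact open $\id\in U\subseteq V$ and $\delta>0$ with $\mu_n(B_n^{U,V})\geq\delta$ for infinitely many $n$; passing to an ultrafilter $\ul$ containing this infinite set (harmless, since the conclusion we are after and, in the application, the hypotheses of the main theorem do not involve $\ul$) we may assume $\lim_\ul\mu_n(B_n^{U,V})\geq\delta$. Regularising $\{B_n^{U,V}\}_n$ we obtain by (ii) a measurable $S\subseteq[G/\Gamma_n]_\ul^R$ with $\mu_\ul^R(S)\geq\delta$; and choosing a Borel selection $x_n\mapsto s_n(x_n)\in g_n\Gamma_n g_n^{-1}\cap(V\setminus U)$ on $B_n^{U,V}$, the continuity (in the Chabauty topology) of $g_n\Gamma_n\mapsto g_n\Gamma_n g_n^{-1}$ shows that each point of $S$ has, in its stabilizer, a $\ul$-limit of elements $s_n(x_n)$, hence an element of the compact set $\overline V\setminus U$, which omits $\id$ because $U$ is open. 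Thus $\mu_\ul^R\big(\bigcup_{g\in\overline V\setminus U}\operatorname{Fix}(g)\big)\geq\delta$, and it remains to extract a \emph{single} $g\neq\id$ with $\mu_\ul^R(\operatorname{Fix}(g))>0$, which contradicts freeness. This is exactly where $\covol(\Gamma_n)\to\infty$ is indispensable: for a constant sequence $\Gamma_n=\Gamma$ the action on $[G/\Gamma]_\ul^R$ is still point-wise almost everywhere free — each $\operatorname{Fix}(g)$ sitting inside a measure-zero set of cosets of $G/\Gamma$ — while $\{\Gamma\}$ is not mostly Farber, the ``medium-sized'' elements of the conjugates $h\Gamma h^{-1}$ ($h\in G$) occurring for almost every coset but moving with it. Under $\covol(\Gamma_n)\to\infty$ one shows instead that $\mu_\ul^R$ is non-atomic and that the weak-$*$ $\ul$-limit $\nu$ of the pushforwards $(s_n)_*\big(\mu_n|_{B_n^{U,V}}\big)$ — a measure of mass $\geq\delta$ on $\overline V\setminus U$ avoiding $\id$ — must have an atom at some $g\neq\id$; since $\operatorname{Fix}(g)$ is the decreasing intersection, over a neighbourhood basis $W$ of $g$, of the classes of the Borel sequences $\{g_n\Gamma_n:\ g_n\Gamma_n g_n^{-1}\cap W\neq\emptyset\}$, whose $\mu_\ul^R$-measures stay $\geq\nu(\{g\})>0$, one gets $\mu_\ul^R(\operatorname{Fix}(g))>0$.

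The main obstacle is this last concentration step: proving, from $\covol(\Gamma_n)\to\infty$, that the asymptotic distribution of the stabilizer elements $s_n$ cannot be diffuse. I would tackle it by unfolding $\int_{G/\Gamma_n}\#\{\gamma\in\Gamma_n\setminus\{\id\}:\ g_n\gamma g_n^{-1}\in V\setminus U\}\,d\mu_n(g_n\Gamma_n)$ along the $\Gamma_n$-conjugacy classes meeting $V\setminus U$, using the divergence of the covolume to kill the non-atomic part of the limit while an atomic part persists into the regular ultraproduct. Making this precise, together with identifying exactly which properties of $[G/\Gamma_n]_\ul^R$ from \cite{ulcg} are needed for (ii) and for the bound $\mu_\ul^R(S)\geq\delta$, is the remaining technical work.
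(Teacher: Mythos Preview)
Your argument up through the inequality $\mu_\ul^R\big(\bigcup_{g\in\overline V\setminus U}\operatorname{Fix}(g)\big)\geq\delta$ is already a complete proof under the reading of ``free'' the paper actually uses, namely that almost every stabilizer is trivial: a positive-measure set of points with nontrivial stabilizer is precisely the negation of that. The paper's one-line derivation is exactly this --- the stabilizer of $[g_n\Gamma_n]_\ul^R$ is the Chabauty $\ul$-limit of $g_n\Gamma_n g_n^{-1}$, which equals $\{\id_G\}$ iff $g_n\Gamma_n g_n^{-1}\cap V\subset U$ $\ul$-eventually for every precompact $U\subset V$; so ``a.e.\ trivial stabilizer'' and ``$\lim_\ul\mu_n(B_n^{U,V})=0$ for all $U,V$'' are the same condition read pointwise versus in measure. (Your regularisation is also heavier than needed: continuity of conjugation gives $W\cdot B_n^{U,V}\subset B_n^{U',V'}$ for slightly adjusted $U',V'$, so the $B_n^{U,V}$ are essentially regular as they stand.) The covolume hypothesis plays no role in this.

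The extra step you set yourself --- extracting a single $g$ with $\mu_\ul^R(\operatorname{Fix}(g))>0$ --- arises because you take the literal definition ``$\operatorname{Fix}(g)$ null for every $g$'' together with the paper's Fubini remark equating it with a.e.-trivial stabilizers. Your constant-$\Gamma$ example in $\mathrm{PSL}_2(\mathbb{R})$ correctly shows that remark is imprecise, but the fix is to use the a.e.-trivial reading (which is what Step~4 actually feeds into Step~1), not to prove concentration. And the concentration argument has a real gap: divergence of covolume makes $\mu_\ul^R$ non-atomic, but gives no reason for the limiting distribution $\nu$ of the selected elements $s_n(x_n)$ to be atomic --- they can spread diffusely over $\overline V\setminus U$ regardless of $\covol(\Gamma_n)$ --- and the conjugacy-class unfolding you propose bounds only the total mass of stabilizer elements in $V\setminus U$, not their accumulation at any fixed $g$. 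Drop this step.
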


In some cases it is easy to see that a mostly Farber sequence is automatically Farber. 

\begin{lem}\label{mostly}
  Let $G$ be a \lcsc group and let $\{\Gamma_n\}_n$ be a mostly Farber sequence of lattices of $G$. Assume that one of the following conditions holds
  \begin{itemize}
  \item the sequence $\{\Gamma_n\}_n$ is nowhere thin\footnote{a sequence of lattices $\{\Gamma_n\}_n$ is nowhere thin if there exists a neighborhood of the identity $U\subset G$ such that for every $g\in G$ and $n\in\bn$ we have $g\Gamma_n g^{-1}\cap U=\{\id\}$};
  \item there is a neighborhood of the identity $U\subset G$ which does not contain any discrete subgroup;
  \end{itemize}
then $\{\Gamma_n\}_n$ is a Farber sequence.
\end{lem}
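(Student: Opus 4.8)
The plan is to show that under either hypothesis the "mostly Farber" condition upgrades to the genuine Farber condition by taking a suitable limit over the defining neighborhoods. Recall that mostly Farber gives, for every pair of precompact identity neighborhoods $U\subset V$, that the proportion of cosets $g\Gamma_n$ with $g\Gamma_n g^{-1}\cap V\subset U$ tends to $1$; equivalently, the proportion of cosets with $g\Gamma_n g^{-1}\cap V\not\subset U$ tends to $0$. To prove Farber we must show that for every precompact identity neighborhood $U$, the proportion of cosets with $g\Gamma_n g^{-1}\cap U\neq\{\id\}$ tends to $0$.

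First I would treat the nowhere thin case, which is essentially immediate. By hypothesis there is a precompact identity neighborhood $W$ such that $g\Gamma_n g^{-1}\cap W=\{\id\}$ for all $g$ and all $n$. Given an arbitrary precompact identity neighborhood $U$, shrink $U$ if necessary (this only makes the event larger, so it suffices to prove the statement for small $U$), and assume $U\subset W$. Then $g\Gamma_n g^{-1}\cap U\subset g\Gamma_n g^{-1}\cap W=\{\id\}$ for every $g$ and $n$, so the set $\{g\Gamma_n:\ g\Gamma_n g^{-1}\cap U\neq\{\id\}\}$ is empty and the limit is trivially $0$. (One does not even need mostly Farber here, but the lemma is stated as a common roof.)

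The substantive case is the second one. Let $W$ be a precompact identity neighborhood containing no discrete subgroup, and fix an arbitrary precompact identity neighborhood $U$; as above we may assume $U\subset W$. The key point is a covering/compactness argument: I claim that any subgroup $H\le G$ with $H\cap U=\{\id\}$ fails, but more usefully, that there is a \emph{smaller} precompact identity neighborhood $U_0\subset U$ such that for every subgroup $H\le G$, if $H\cap W\subset U_0$ then in fact $H\cap W=\{\id\}$. Granting this, applying mostly Farber with the pair $U_0\subset W$ shows that the proportion of cosets with $g\Gamma_n g^{-1}\cap W\not\subset U_0$ tends to $0$; on the complementary (asymptotically full measure) set we have $g\Gamma_n g^{-1}\cap W\subset U_0$, whence by the claim $g\Gamma_n g^{-1}\cap W=\{\id\}$, and a fortiori $g\Gamma_n g^{-1}\cap U=\{\id\}$ since $U\subset W$. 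This gives the Farber conclusion for $U$, and since $U$ was an arbitrary small neighborhood (and shrinking is harmless), for all $U$.

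It remains to justify the claim, which is where the real work lies and which I expect to be the main obstacle. The idea is that if no such $U_0$ existed, one could find subgroups $H_m\le G$ and nontrivial elements $h_m\in H_m\cap W$ with $h_m\to\id$; one then wants to produce from a single such $H=H_m$ with $h\in (H\cap W)\setminus\{\id\}$ and $h$ very close to $\id$ an entire nontrivial subgroup of $W$, contradicting the choice of $W$. Concretely, pick a symmetric precompact identity neighborhood $W_0$ with $W_0^2\subset W$; if $h\in W_0\setminus\{\id\}$ then the powers $h,h^2,h^3,\dots$ stay in $W$ until the first time $N$ that $h^N\notin W_0$, and at that moment $h^{N}=h^{N-1}\cdot h\in W_0\cdot W_0\subset W$ as well, so the cyclic subgroup generated by $h$ has a nonempty intersection with $W$ that properly contains $\{\id\}$ — but to contradict "contains no discrete subgroup" we need a \emph{subgroup} inside $W$, not just that $\langle h\rangle$ meets $W$. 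The standard fix is the no-small-subgroups-style argument: either $\langle h\rangle$ is contained in a compact set, in which case its closure is a nontrivial compact subgroup which, after conjugating/using that $h$ was chosen arbitrarily close to $\id$, lies in $W$; or one uses that $G$ has a neighborhood basis allowing one to find, inside any sufficiently small neighborhood of a nontrivial $h$, a genuine subgroup. I would carry this out by fixing once and for all a symmetric precompact $V_0$ with $\overline{V_0}\subset W$, observing that a nontrivial subgroup meeting $V_0$ in more than $\{\id\}$ but whose intersection with $\partial V_0$ is empty must be contained in $V_0\subset W$, and then choosing $U_0$ small enough (using precompactness of $\overline{V_0}$ and a uniform "escape" estimate) that any subgroup $H$ with $\{\id\}\neq H\cap W\subset U_0$ would be forced to remain in $V_0$, hence be a discrete — or at worst, closure-nontrivial — subgroup inside $W$, the contradiction we want. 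The delicate point is making the escape estimate uniform over all subgroups $H$, which is exactly what precompactness of $W$ buys us.
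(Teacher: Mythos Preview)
Your handling of the nowhere-thin case contains a monotonicity error that recurs throughout. For the Farber condition, if $U_1\subset U_2$ then the bad set $\{g\Gamma_n: g\Gamma_ng^{-1}\cap U_1\neq\{\id\}\}$ is \emph{contained in} the bad set for $U_2$; hence proving Farber for \emph{large} $U$ implies it for small $U$, not the other way around. Your reduction ``shrink $U$ if necessary \dots\ it suffices to prove the statement for small $U$'' is backwards, and your remark ``one does not even need mostly Farber here'' is false: for $U\supset W$ you must apply mostly Farber to the pair $W\subset U$ to get $g\Gamma_ng^{-1}\cap U\subset W$ on a set of asymptotically full measure, and only then combine with nowhere-thin to conclude $g\Gamma_ng^{-1}\cap U\subset g\Gamma_ng^{-1}\cap W=\{\id\}$. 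The same error appears in your second case (``as above we may assume $U\subset W$'' and the closing sentence).

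For the second case, you correctly isolate the needed claim --- for discrete $H\le G$, if $H\cap W\subset U_0$ then $H\cap W=\{\id\}$ --- but then embark on an escape/compactness argument that you do not complete and that, in the generality stated, has no evident route to completion. The paper's proof is a one-line algebraic observation that bypasses all of this. Take $U_0$ symmetric with $U_0^2\subset W$ (in the paper's notation, set $V:=U^2$ and apply mostly Farber to the pair $U\subset V$). If $S:=H\cap W\subset U_0$, then $S^2\subset H\cap U_0^2\subset H\cap W=S$ and $S^{-1}=S$, so $S$ is itself a subgroup; being contained in the discrete group $H$ it is a discrete subgroup sitting inside $W$, hence $S=\{\id\}$ by hypothesis. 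No uniformity, no limits of subgroups, no closures of cyclic groups are needed. The ``uniform escape'' you flag as the main obstacle simply does not arise once you see the squaring trick.
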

\begin{proof}
  The first point is obvious from the definition. For the second consider $V:=U^2$. Take $g\Gamma_n\in G/\Gamma_n$ such that $S:=g\Gamma_ng^{-1}\cap V\subset U$. Then the group generated by $S$ is a subgroup of $g\Gamma_ng^{-1}$. On the other hand observe that $S^2\subset g\Gamma_ng^{-1}\cap V\subset g\Gamma_ng^{-1}\cap U=S$, thus $S$ is a discrete subgroup of $U$ and hence $S=\{\id\}$.  
\end{proof}

\subsection*{Step 2: Spectral gap vs Property ($\tau$)}

\begin{prop}\label{step2}
  Let $G$ be a \lcsc group and assume that it acts measurably and preserving the measure on the sequence of probability spaces $\{(X_n,\mu_n)\}_n$. Assume moreover that $\{(X_n,\mu_n)\}_n$ has spectral gap. Then the action of $G$ on the regular ultraproduct $[X_n]_\ul^R$ has spectral gap. 
\end{prop}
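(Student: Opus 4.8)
The plan is to show the contrapositive in spirit: starting from an almost invariant vector for the Koopman representation of $G$ on $\mathrm L_0^2([X_n]_\ul^R,\mu_\ul^R)$, I would produce a sequence of almost invariant vectors for $\bigoplus_n \mathrm L_0^2(X_n,\mu_n)$, contradicting the spectral gap hypothesis. The first step is to understand the Hilbert space $\mathrm L^2([X_n]_\ul^R,\mu_\ul^R)$ concretely in terms of the $\mathrm L^2(X_n,\mu_n)$. Since the $\sigma$-algebra of the regular ultraproduct is generated by classes $[A_n]_\ul^R$ of regular sequences, and $\mu_\ul^R([A_n]_\ul^R)=\lim_\ul\mu_n(A_n)$, the indicator $\mathbf 1_{[A_n]_\ul^R}$ should correspond to the class of the sequence $(\mathbf 1_{A_n})_n$ in a suitable ultraproduct of the Hilbert spaces $\mathrm L^2(X_n,\mu_n)$; more precisely, I expect a canonical $G$-equivariant isometric embedding of $\mathrm L^2([X_n]_\ul^R,\mu_\ul^R)$ into the Banach-space (Hilbert-space) ultraproduct $\prod_\ul \mathrm L^2(X_n,\mu_n)$ — the quotient of bounded sequences $(\xi_n)_n$ with $\|\xi_n\|_2$ bounded, by the subspace of sequences with $\lim_\ul\|\xi_n\|_2=0$, equipped with the inner product $\langle[\xi_n],[\eta_n]\rangle=\lim_\ul\langle\xi_n,\eta_n\rangle$. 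This is one of the ``easy properties collected in \cite{ulcg}'' that I would invoke (or sketch); the key point is that it intertwines the Koopman action of $G$ on the left with the ``diagonal'' action $g\cdot[\xi_n]=[g\xi_n]$ on the right. Note that the latter action of $G$ on $\prod_\ul \mathrm L^2(X_n,\mu_n)$ need not be continuous, but on the image of the embedding (generated by regular sequences) it is, and in fact $\mathrm L^2([X_n]_\ul^R)$ can be identified with the continuous part $\overline{\bigl(\prod_\ul \mathrm L^2(X_n,\mu_n)\bigr)}_{\mathrm{cont}}$.

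Granting this identification, suppose $(v^{(m)})_m$ is a sequence of unit vectors in $\mathrm L_0^2([X_n]_\ul^R,\mu_\ul^R)$ with $\|g v^{(m)}-v^{(m)}\|\to 0$ uniformly on compact subsets of $G$. Represent each $v^{(m)}$ by a bounded sequence $(v^{(m)}_n)_n$ in $\mathrm L^2(X_n,\mu_n)$; by subtracting $\int_{X_n} v^{(m)}_n\,d\mu_n$ (which tends to $0$ along $\ul$ since $v^{(m)}\in\mathrm L_0^2$) I may assume $v^{(m)}_n\in\mathrm L_0^2(X_n,\mu_n)$, and after renormalizing, $\|v^{(m)}_n\|_2=1$ for $\ul$-a.e.\ $n$. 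The invariance $\|g v^{(m)}-v^{(m)}\|^2=\lim_\ul\|g v^{(m)}_n-v^{(m)}_n\|^2$ being small for $g$ in a fixed compact $K\subset G$ means that for each $m$ the set $\{n:\ \sup_{g\in K}\|gv^{(m)}_n-v^{(m)}_n\|^2>\delta_m\}$ is not in $\ul$, for an appropriate $\delta_m\to 0$. The second step is then a standard diagonal extraction along the ultrafilter $\ul$: choosing an exhaustion $K_1\subset K_2\subset\cdots$ of $G$ by compact sets and $\eps_m\to 0$, the sets $B_m:=\{n:\ \|v^{(m)}_n\|_2=1,\ \sup_{g\in K_m}\|gv^{(m)}_n-v^{(m)}_n\|_2<\eps_m\}$ all lie in $\ul$, and one produces a single sequence $w_n\in\mathrm L_0^2(X_n,\mu_n)$ with $\|w_n\|_2=1$ by setting $w_n=v^{(m)}_n$ for $n$ in a decreasing tower of subsets; this sequence is then $(\bigoplus_n$-)almost invariant, i.e.\ $\sup_{g\in K}\|g w_n-w_n\|_2\to 0$ for every compact $K$, contradicting the spectral gap of $\{(X_n,\mu_n)\}_n$.

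The main obstacle, and the one place deserving care, is the first step: pinning down the precise relationship between $\mathrm L^2$ of the regular ultraproduct and the Hilbert-space ultraproduct of the $\mathrm L^2(X_n)$, in particular the role of \emph{regularity}. A general bounded sequence $(\xi_n)_n$ need not be ``regular'' in any sense and the diagonal $G$-action on the raw ultraproduct is only a (possibly discontinuous) action by isometries; one must check that the span of classes of indicators of regular sequences is dense, that it is $G$-invariant with continuous action, and that this is exactly $\mathrm L^2([X_n]_\ul^R,\mu_\ul^R)$. For the proof of the proposition what is actually needed is only the easier inclusion: that any unit vector in $\mathrm L^2([X_n]_\ul^R)$ lifts to a bounded sequence $(\xi_n)$ with $\lim_\ul\|\xi_n\|_2=1$ and $\lim_\ul\langle g\xi_n,\xi_n\rangle=\langle gv,v\rangle$, which follows from density of simple functions built from regular sets together with the measure formula $\mu_\ul^R([A_n]_\ul^R)=\lim_\ul\mu_n(A_n)$ and its bilinear consequence $\mu_\ul^R([A_n]_\ul^R\cap[B_n]_\ul^R)=\lim_\ul\mu_n(A_n\cap B_n)$ for regular $A_n,B_n$. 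Once that lift exists, the contradiction with spectral gap is immediate as above, so I would state the needed facts about the regular ultraproduct as a lemma (with a one-line justification referring to \cite{ulcg}) and keep the rest of the argument at the level of the diagonal extraction sketched here.
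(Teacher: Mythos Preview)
Your overall strategy matches the paper's: argue by contradiction, embed $\mathrm L_0^2([X_n]_\ul^R,\mu_\ul^R)$ into the Hilbert-space ultraproduct $\prod_\ul\mathrm L_0^2(X_n,\mu_n)$, and lift a sequence of almost invariant vectors. But there is a genuine gap in your second step, and it is exactly the point where you think the difficulty does \emph{not} lie.

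You write that $\|gv^{(m)}-v^{(m)}\|^2=\lim_\ul\|gv_n^{(m)}-v_n^{(m)}\|^2$ being small for every $g$ in a compact $K$ ``means that for each $m$ the set $\{n:\sup_{g\in K}\|gv_n^{(m)}-v_n^{(m)}\|^2>\delta_m\}$ is not in $\ul$''. This passage from pointwise-in-$g$ to uniform-in-$g$ is unjustified: $K$ is uncountable, and the intersection over $g\in K$ of sets in $\ul$ need not lie in $\ul$. The lifting property you isolate in your last paragraph (that $\lim_\ul\langle g\xi_n,\xi_n\rangle=\langle gv,v\rangle$ for each fixed $g$) is not enough to run the argument.

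Regularity is needed precisely here, not only for the embedding. The paper uses that a lift $(f_n^k)_n$ of $f_\ul^k\in\mathrm L_0^2([X_n]_\ul^R)$ satisfies an equicontinuity condition: for every $\eps>0$ there is a neighborhood $U_\eps\ni\id_G$ with $\|gf_n^k-f_n^k\|_2\leq\eps$ for all $g\in U_\eps$ and $\ul$-a.e.\ $n$. One then covers the fixed compact $K$ witnessing spectral gap by finitely many translates $U_{\eps/2}g_1,\dots,U_{\eps/2}g_l$, applies the ultralimit only at the finitely many points $g_j$, and uses the triangle inequality $\|f_n^k-ug_jf_n^k\|_2\leq\|f_n^k-uf_n^k\|_2+\|f_n^k-g_jf_n^k\|_2$ to get uniform $(K,\eps)$-invariance for $\ul$-a.e.\ $n$. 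Once this is in place your diagonal extraction is unnecessary: a single sufficiently invariant $f_\ul^k$ already produces a $(K,\eps)$-invariant $f_n^k$ for some $n$, contradicting spectral gap directly.
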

\begin{proof}
  Assume that $\{f_\ul^k\}_k$ is a sequence of almost invariant vectors of $\mathrm L_0^2([X_n]_\ul^R,\mu_\ul^R)$. Since we have that $\mathrm L_0^2([X_n]^R_\ul,\mu^R_\ul)$ is a subspace of the metric ultraproduct of the Hilbert spaces $\mathrm L_0^2(X_n,\mu_n)$, we have that each $f^k_\ul$ is represented as a sequence of functions $f_n^k\in \mathrm L_0^2(X_n,\mu_n)$, that is $f_\ul^k([x_n]_\ul^R)=\lim_\ul f_n^k(x_n)$ for almost all $[x_n]_\ul^R\in [X_n]_\ul^R$. The fact that $f^k_\ul$ is defined on the regular ultraproduct tells us that for every $\eps>0$ there exists a neighborhood of the identity $U_\eps\subset G$ such that for $\ul$-almost every $n$ we have $\|f_n^k-gf_n^k\|_2\leq \eps$ for every $g\in U_\eps$.

Since $\{(X_n,\mu_n)\}_n$ has spectral gap, there is a compact subset $K\subset G$ and an $\eps>0$ such that $\bigoplus \mathrm L_0^2(X_n,\mu_n)$ has no $(K,\eps)$-invariant vectors. Choose $U:=U_{\eps/2}$ as above. Then there is a finite set $\{g_j\}_{j\leq l}$ such that $\cup_j Ug_j \supset K$. Since $\{f_\ul^k\}_k$ is a sequence of almost invariant vectors we have that there exists $k$ such that we have $\|f_\ul^k-gf_\ul^k\|_2\leq \eps/2$ for every $g\in K$. This implies that for $\ul$-almost every $n$ and every $j\leq l$ we have $\|f_n^k-g_jf_n^k\|_2\leq \eps/2$. Finally observe that if $g\in K$, then there exists $u\in U$ and $j\leq l$ such that $g=ug_j$ and we have that $\|f_n^k-gf_n^k\|_2\leq \|f_n^k-uf_n^k\|_2+\|f_n^k-g_jf_n^k\|\leq \eps$ for $\ul$-almost every $n$ which is a contradiction.
\end{proof}

\subsection*{Step 3: Null orbits}

Let $G$ be a \lcsc group and assume that it acts measurably and preserving the measure on the probability space $(X,\mu)$. We say that the action has \defin{null orbits} if whenever we fix a neighborhood of the identity $V\subset G$, for every $\eps$ there exists a measurable subset $A_\eps\subset X$ such that $GA_\eps\subset X$ is conull, $VA_\eps$ is measurable and $\mu(VA_\eps)\leq \eps$. Note that $A_\eps$ could have measure $0$. Let us give some examples.

\begin{itemize}
\item If the action of $G$ on $(X,\mu)$ is free and it admits an external cross section, then the action has null orbits and the subsets $A_\eps$ can be chosen as images of subsets of the external cross section.
\item If $(X,\mu)$ is a standard Borel space, then the action has null orbits if and only if each orbit is a null set. This follows, for example, from the existence of a discrete section.
\item If an action is measurably free, then it has null orbits. Indeed one can show that every measurably free action has a free standard factor, see \cite[Proposition 2.13]{ulcg} and \cite[Theorem 3.28]{CGS}.
\end{itemize}

\begin{prop}\label{step3}
 Let $G$ be a compactly generated \lcsc group and let $\{\Gamma_n\}_n$ be a sequence of lattices of $G$ whose covolume tends to infinity. Then the action of $G$ on the regular ultraproduct $[G/\Gamma_n]_\ul^R$ has null orbits.
\end{prop}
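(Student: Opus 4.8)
The plan is to exhibit, for each precompact neighbourhood $V$ of $\id$ and each $\eps>0$, a measurable set $A_\eps\subseteq[G/\Gamma_n]_\ul^R$ with $GA_\eps$ conull and $\mu_\ul^R(VA_\eps)\le\eps$; this is exactly the null-orbits property (and $VA_\eps$, being of the form $[VS_n]_\ul^R$, is measurable for free). Note that $G$ is automatically unimodular and non-compact, both being forced by the existence of lattices of arbitrarily large covolume. The set $A_\eps$ will be the class of a sequence of \emph{nets} in the transitive $G$-spaces $G/\Gamma_n$.

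Fix a compact symmetric generating neighbourhood $K$ of $\id$ with $V\subseteq K$, and let $m\in\bn$ be even and large, to be chosen as a function of $V$ and $\eps$ only. In each $G/\Gamma_n$ choose a maximal subset $S_n$ which is $K^m$-separated, i.e.\ $s'\notin K^m s$ for distinct $s,s'\in S_n$. By maximality $S_n$ is $K^m$-dense: $\bigcup_{s\in S_n}K^ms=G/\Gamma_n$. By the separation the balls $\{K^{m/2}s\}_{s\in S_n}$ — and hence the sets $\{Vs\}_{s\in S_n}$, since $V\subseteq K^{m/2}$ — are pairwise disjoint. One checks that $(S_n)_n$ is regular, and sets $A_\eps:=[S_n]_\ul^R$.

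That $A_\eps$ meets every orbit — so that $GA_\eps$ is conull — is the clean half, and is where compact generation is used in an essential way. Given a point $[h_n\Gamma_n]_\ul^R$, use $K^m$-density to pick for each $n$ an element $k_n\in K^m$ with $k_n^{-1}h_n\Gamma_n\in S_n$. Since $K^m$ is compact, the ultralimit $k:=\lim_\ul k_n$ exists; putting $g:=k^{-1}$, and using $gh_n\Gamma_n=(k^{-1}k_n)\,(k_n^{-1}h_n\Gamma_n)$ together with $k^{-1}k_n\to\id$, we get
\[
g[h_n\Gamma_n]_\ul^R=[\,(k^{-1}k_n)\,(k_n^{-1}h_n\Gamma_n)\,]_\ul^R=[\,k_n^{-1}h_n\Gamma_n\,]_\ul^R\in A_\eps .
\]

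The remaining, harder, half is the estimate $\mu_\ul^R(VA_\eps)=\lim_\ul\mu_n(VS_n)\le\eps$ for $m$ large. By disjointness, $\mu_n(VS_n)=\sum_{s\in S_n}\mu_n(Vs)$. The hypothesis $\covol(\Gamma_n)\to\infty$ enters twice: first each ball is small, $\mu_n(Vs)\le\vol(V)/\covol(\Gamma_n)$, since the quotient map $G\to G/\Gamma_n$ does not increase measure; second — the main obstacle — one must bound the total mass $\sum_s\mu_n(Vs)$, and the natural way is to prove that for $m=m(V,\eps)$ large one has $\mu_n(Vs)\le\eps\cdot\mu_n(K^{m/2}s)$ for every $n$ and every $s\in G/\Gamma_n$, whence $\sum_s\mu_n(Vs)\le\eps\sum_s\mu_n(K^{m/2}s)\le\eps$ again by disjointness. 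This amounts to a uniform-in-$n$ lower bound for the growth of metric balls in $G/\Gamma_n$ between scale $V$ and scale $K^{m/2}$. Heuristically it must hold: a ball in $G/\Gamma_n$ can stop growing only once it has full measure, because $K$ generates $G$ and $G$ acts transitively; and a ball that stays small up to scale $K^{m/2}$ would force a ``locally cocompact'' piece of $\Gamma_n$, i.e.\ a cocompact sublattice, which has covolume at least $\covol(\Gamma_n)\to\infty$, so such small balls disappear once the covolumes are large. Making this precise and uniform in $n$ — with attention to points where $\Gamma_n$ has small injectivity radius, where the $V$- and $K^{m/2}$-balls fold around the same short loops and the comparison must be run in the thin parts of $G/\Gamma_n$ — is the technical heart of the argument, and is what makes essential use of compact generation together with $\covol(\Gamma_n)\to\infty$.
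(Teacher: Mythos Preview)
Your overall plan---take a $K^m$-net $S_n\subset G/\Gamma_n$, use compactness of $K^m$ together with an ultralimit to show every point of the regular ultraproduct lies in $G[S_n]_\ul^R$, then bound $\mu_n(VS_n)$---is natural, and the conullity half is correct. The gap is the second half, which you yourself flag as ``the technical heart.'' The pointwise comparison $\mu_n(Vs)\le\eps\,\mu_n(K^{m/2}s)$ is not established, and your heuristics do not lead to it. At a point $s=g_s\Gamma_n$ of small injectivity radius the $K^{m/2}$-ball may fold around far more lattice elements than the $V$-ball does (the relevant finite sets are $g_s\Gamma_n g_s^{-1}\cap K^m$ versus $g_s\Gamma_n g_s^{-1}\cap V^{-1}V$), so the collapsing can be much \emph{worse} at the larger scale, not comparable; and the sentence about a ball that ``stays small'' forcing a ``cocompact sublattice'' of covolume at least $\covol(\Gamma_n)$ does not unpack into a uniform bound. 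The aside ``one checks that $(S_n)_n$ is regular'' hides the same problem: since $\mu_n(S_n)=0$, regularity asks that $\lim_\ul\mu_n(US_n)\to0$ as $U$ shrinks to $\{\id_G\}$, which is again a volume estimate you have not proved.

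The paper sidesteps all of this with a different, elementary device. It fixes a single small scale $U$, takes a maximal $U$-separated set $D_n\subset G/\Gamma_n$, and then \emph{iteratively thins} it: using a graph on $D_n$ (edges join $d,d'$ when $Kd\cap Kd'\neq\emptyset$, for a suitably large compact $K$) it partitions $D_n$ into clusters of two or three nearby points and keeps from each cluster the single point $d$ minimising $\mu_n(Ud)$. One pass at least halves $\sum_d\mu_n(Ud)$ (minimum $\le$ average), so after $i$ passes one obtains $D_n^i\subset D_n$ with $\mu_n(UD_n^i)\le2^{-i}$, while $D_n^i$ stays $K_i$-dense for some compact $K_i$. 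No comparison between balls at two different scales is ever needed. Regularity is then arranged separately, by fattening $D_n^i$ to $B_\eta D_n^i$ for a well-chosen $\eta$ via monotonicity of $\eta\mapsto\lim_\ul\mu_n(B_\eta D_n^i)$. If you want to salvage your route, you must either supply a genuine proof of the uniform ball-growth inequality (which, if true, is not obvious) or replace it by this selection trick.
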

\begin{proof}
  Fix a right-invariant compatible metric $d$ on $G$. Denote by $B_\eps$ the $d$-ball of radius $\eps$ around the identity and assume that $U:=B_1$ is precompact. Take a maximal (under inclusion) $U$-separated subset $D_n\subset G/\Gamma_n$, that is a subset $D_n$ such that for every $d\neq d'\in D_n$ we have $Ud\cap Ud'=\emptyset$ and $U^4D_n=G/\Gamma_n$. We claim that for every $i\geq 1$ there exists $D^i_n\subset D_n$ such that $\mu_n(UD^i_n)\leq 2^{-i}\mu_n(D_n)$ for every $n$ and for which there exists a compact subset $K_i\subset G$ such that $K_iD_n^i=G/\Gamma_n$. Let us define $D^1_n$ and the other can be easily obtained by induction. Let $K\subset G$ be a symmetric compact subset. We consider the graph $(D_n,E_n^K)$ defined by $(d,d')\in E_n$ if $Kd\cap Kd'\neq \emptyset$. We observe that for $K$ big enough the graph has no isolated points. We consider a partition of $D_n$ in subsets consisting on 2 or 3 elements of $D_n$ which are contained in a translate of $K^3$. The set $D^1_n$ will be chosen to contain a point from each atom $\alpha$ of the partition, the point $x\in \alpha$ such that $Ux$ has minimal measure among all $y\in\alpha$.

Observe now that we do not know whether $\{D_n^i\}_n$ is a regular sequence. However we have that $ED_n^i$ is measurable for every $E\subset G$ Borel. In particular $B_\eps D_n^i$ is measurable and observe that the function $\eps\mapsto \lim_\ul(B_\eps D_n^i)$ is monotone and hence continuous almost everywhere. This implies, see Lemma 1.14 of \cite{ulcg}, that there are $\eta,\eps\leq 1/2$ such $\{B_\eta D_n^i\}_n$ and $\{B_\eps B_\eta D_n^i\}_n$ are regular and such that $B_\eps[B_\eta D_n^i]_\ul^R=[B_\eps B_\eta D_n^i]_\ul^R$. Observe also that $\mu_n(B_\eps B_\eta D_n^i)\leq \mu_n(B_1 D_n^i)\leq 2^{-i}$ and therefore the action of $G$ on the regular ultraproduct has null orbits.
\end{proof}

\subsection*{Step 4: Non-standard Stuck-Zimmer}

\begin{prop}\label{step4}
  If $G$ has property $\{N_j\}_{j\leq k}$-(SZ), then almost every stabilizer of every measurable action of $G$ on a probability space $(X,\mu)$ is finite whenever
  \begin{itemize}
  \item the action has null orbits,
  \item the action of every subgroup $N_i$ has spectral gap.
  \end{itemize}
\end{prop}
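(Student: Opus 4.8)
The plan is to deduce Proposition \ref{step4} from the definition of property $\{N_j\}_{j\leq k}$-(SZ) — which speaks only of actions on \emph{standard} Borel spaces all of whose $G$-orbits are null — by passing to a carefully chosen standard Borel factor. Concretely, I would construct a $G$-equivariant factor map $\pi\colon(X,\mu)\to(Y,\nu)$ onto a standard Borel probability space equipped with a measurable \pmp $G$-action such that (i) every $G$-orbit in $Y$ is $\nu$-null, and (ii) each $N_i$ still acts on $(Y,\nu)$ with spectral gap. Granting such a $\pi$, property (SZ) applies to the action of $G$ on $(Y,\nu)$ and gives that $\operatorname{Stab}_G(y)$ is finite for $\nu$-almost every $y$; since $\pi$ is equivariant, $\operatorname{Stab}_G(x)\subseteq\operatorname{Stab}_G(\pi(x))$ for every $x$, and a subgroup of a finite group is finite, so $\operatorname{Stab}_G(x)$ is finite for $\mu$-almost every $x$, which is the assertion.

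Condition (ii) is automatic and costs nothing: a factor map realizes $\mathrm L_0^2(Y,\nu)$ as an $N_i$-subrepresentation of $\mathrm L_0^2(X,\mu)$, so if the latter has no almost invariant $N_i$-vectors neither does the former. The whole content therefore lies in finding a standard factor that inherits condition (i), and this is the main obstacle, because — unlike spectral gap — ``null orbits'' is not passed down by an arbitrary factor (the trivial one-point factor always violates it). The remedy is to force the section-like data supplied by the null-orbits hypothesis into the factor. One natural way: fix a countable family $\{V_\ell\}_\ell$ of precompact neighbourhoods of $\id_G$ that is cofinal under inclusion (available since \lcsc groups are $\sigma$-compact, and then $\bigcup_\ell V_\ell=G$), use the null-orbits hypothesis to pick for every $\ell$ and $i\in\bn$ a measurable $A_{\ell,i}\subseteq X$ with $GA_{\ell,i}$ conull, $V_\ell A_{\ell,i}$ measurable and $\mu(V_\ell A_{\ell,i})\leq 2^{-i}$, and let $(Y,\nu)$ be the standard Borel $G$-space whose measure algebra is the closed $G$-invariant sub-$\sigma$-algebra $\mathcal B$ of $\malg(X,\mu)$ generated by all the sets $V_m A_{\ell,i}$ ($m,\ell,i\in\bn$) together with their translates under a fixed countable dense subgroup $\Lambda<G$. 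Here one invokes two standard facts: that $\mathcal B$, being separable and $G$-invariant, is the measure algebra of a standard Borel $G$-space and the inclusion $\mathcal B\hookrightarrow\malg(X,\mu)$ is induced by a genuinely equivariant measurable factor map $\pi$ (Mackey's point-realization theorem; the $G$-invariance of $\mathcal B$ comes from strong continuity of the Koopman representation, which upgrades $\Lambda$-invariance to $G$-invariance), and that for a standard space the ``null orbits'' condition of Step 3 is equivalent to every $G$-orbit being a null set. It then remains to check that the images in $Y$ of the sets $V_m A_{\ell,i}$ witness the null-orbits condition for $Y$, using that $GA_{\ell,i}=\bigcup_m V_m A_{\ell,i}\in\mathcal B$ is conull.

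This last verification is where I expect the real work to lie. One must track how $\pi$, the expansions $V_m A_{\ell,i}$ and the saturations $GA_{\ell,i}$ descend to $Y$, control the mod-null ambiguities that arise — a priori the $G$-saturation of a $\mu$-null set need not be $\mu$-null, so ``$GA_{\ell,i}$ conull'' is a genuine constraint on the chosen representatives and must be made to survive the passage to $Y$ — and in particular rule out that $Y$ degenerates to a space with positive-measure orbits. This is exactly the sort of bookkeeping handled by the regularization lemmas and standard-factor constructions of \cite{ulcg} (notably Lemma 1.14 of \cite{ulcg}) and \cite{CGS}; modulo invoking those, everything else — the spectral-gap transfer, the appeal to property (SZ) on $(Y,\nu)$, and the pullback of finiteness of stabilisers along $\pi$ — is routine.
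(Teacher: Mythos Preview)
Your proposal is correct and follows essentially the same route as the paper: generate a separable $G$-invariant sub-$\sigma$-algebra from the null-orbit witnesses, invoke Mackey's point-realization to obtain a standard factor $(Y,\nu)$ that still has null orbits and inherits spectral gap for each $N_i$, apply property (SZ) there, and pull back finiteness of stabilizers along the factor map. The paper's version is slightly leaner --- it fixes a single $V$, takes the $G$-invariant $\sigma$-algebra generated by the $A_n$ directly, and cites Lemma 1.4 and Proposition 2.12 of \cite{ulcg} (rather than Lemma 1.14, which is the regularity lemma used in Step~3) for separability and the Mackey/Zimmer realization --- but the argument is the same.
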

\begin{proof}
  This follows from Mackey's theorem \cite{mac1962}. Indeed let $\{A_n\}_n$ be measurable subsets of $X$ which witness that the action has null orbits, that is for some fixed neighborhood of the identity $V$ we have that $\mu(VA_n)$ tends to $0$. Consider the $G$-invariant $\sigma$-algebra generated by them. Since $G$ is separable this algebra is separable, see for example \cite[Lemma 1.4]{ulcg}. Then Mackey's theorem combined with \cite[Proposition B.5]{Zimmer}, see \cite[Proposition 2.12]{ulcg}, implies that the action of $G$ on $X$ has a standard factor $Y$ which has null orbits. Clearly the action of $N_i$ on $Y$ has still spectral gap and hence almost every stabilizer of the action on $Y$ is finite, which implies the desired result.
\end{proof}

\subsection*{Conclusions}

Let $G$ be a \lcsc group with property $\{N_j\}_{j\leq k}$-(SZ). Assume that $G$ has the property $\{N_j\}_{j\leq k}$-($\tau$) with respect to the sequence of lattices $\{\Gamma_n\}_n$ and assume that  $\lim_\ul\covol(\Gamma_n)=\infty$. Consider the action of $G$ on the regular ultraproduct $[G/\Gamma_n]_\ul^R$. By assumption for every $j$ we have that $\{(G/\Gamma_n,\mu_n)\}_n$ has spectral gap as a sequence of $N_j$-spaces, therefore Proposition \ref{step2} implies that the action of $N_j$ on $[G/\Gamma_n]_\ul^R$ has spectral gap. Proposition \ref{step3} tells us that the action of $G$ on $[G/\Gamma_n]_\ul^R$ has null orbits and thanks to Proposition \ref{step4} the action of $G$ on $[G/\Gamma_n]_\ul^R$ has finite stabilizers. Therefore Proposition \ref{step1} tells us that the sequence $\{\Gamma_n\}_n$ is mostly Farber and Lemma \ref{mostly} gives us the conditions for which the sequence is actually Farber.

\section*{Acknowledgements}

This research was supported by the ERC Consolidator Grant No. 681207.




\bibliographystyle{alpha}
\bibliography{bib}

\end{document}